\newcommand\version{January 30, 2011}
\newtheorem{theorem}{Theorem}
\newtheorem{proposition}[theorem]{Proposition}
\newtheorem{lemma}[theorem]{Lemma}
\newtheorem{corollary}[theorem]{Corollary}
\theoremstyle{definition}
\theoremstyle{remark}
\newtheorem{remark}[theorem]{Remark}
\renewcommand{\epsilon}{\varepsilon}
\newcommand{\N}{\mathbb{N}}
\newcommand{\R}{\mathbb{R}}
\DeclareMathOperator{\Tr}{Tr}
\DeclareMathOperator{\tr}{Tr}
\begin{document}

\title[Eigenvalue bounds --- \version]{Eigenvalue bounds for Schr\"odinger operators\\ with a homogeneous magnetic field}

\author{Rupert L. Frank}
\address{Rupert L. Frank, Department of Mathematics,
Princeton University, Princeton, NJ 08544, USA}
\email{rlfrank@math.princeton.edu}

\author{Rikard Olofsson}
\address{Rikard Olofsson, Department of Mathematics, Uppsala University, Box 480, 75106 Uppsala, Sweden}
\email{rikard.olofsson@math.uu.se}

\thanks{\copyright\, 2011 by the authors. This paper may be reproduced, in its entirety, for non-commercial purposes.\\
The second author acknowledges support through the Swedish Research Council.}

\begin{abstract}
 We prove Lieb-Thirring inequalities for Schr\"odinger operators with a homogeneous magnetic field in two and three space dimensions. The inequalities bound sums of eigenvalues by a semi-classical approximation which depends on the strength of the magnetic field, and hence quantifies the diamagnetic behavior of the system. For a harmonic oscillator in a homogenous magnetic field, we obtain the sharp constants in the inequalities.
\end{abstract}

\maketitle


\section{Introduction and main result}

Lieb-Thirring inequalities \cite{LiTh} provide bounds on the sum of negative eigenvalues of Schr\"odinger operators in terms of a phase space integral. In this paper, we are interested in two-dimensional Schr\"odinger operators $H_B+V$ with a homogenous magnetic field of strength $B>0$. Here
$$
H_B=\left(-i\frac\partial{\partial x_1} + \frac{Bx_2}2 \right)^2 + \left(-i\frac\partial{\partial x_2} - \frac{Bx_1}2 \right)^2
$$
is the Landau Hamiltonian in $L^2(\R^2)$ and $V$ is a real-valued function. The Lieb-Thirring inequality states that
\begin{align}\label{eq:mainnonmag}
\Tr\left( H_{B} + V \right)_- 
\leq r_2\ (2\pi)^{-2}  \iint_{\R^2\times\R^2} \left(|p|^2 + V(x) \right)_- \,dx\,dp
\end{align}
with the (currently best, but presumably non-optimal) constant $r_2=\pi/\sqrt{3}$ from \cite{DLL}. Physically, the left side is (minus) the energy of a system of non-interacting fermions in an external potential $V$ and an external, homogeneous magnetic field of strength $B$, whereas the right side is $-r_2$ times a semi-classical approximation to that energy.

Physically, one expects the system to show a diamagnetic behavior, that is, to have a higher energy in the presence of a magnetic field. This is however not reflected in \eqref{eq:mainnonmag}, which has a right hand side independent of $B$.  We refer to \cite{Fr} for further references and a survey over this problem. Our goal in this paper is to obtain a bound similar to \eqref{eq:mainnonmag}, but with a more refined semi-classical approximation which takes $B$ into account. The approximation we propose is 
\begin{equation}\label{eq:magsymb}
\frac{B}{2\pi} \sum_{m=0}^\infty \int_{\R^2} \left((2m+1)B + V(x) \right)_- \,dx \,.
\end{equation}
This quantity reflects the diamagnetic behavior since
\begin{equation}\label{eq:diamag}
\frac{B}{2\pi} \sum_{m=0}^\infty \int_{\R^2} \left((2m+1)B + V(x)\right)_- \,dx
\leq \frac{1}{(2\pi)^2} \iint_{\R^2\times\R^2} (|p|^2+V(x))_- \,dx\,dp
\end{equation}
for every $V$. Inequality \eqref{eq:diamag} follows (even before the $x$-integration) from an easy convexity inequality (see Lemma \ref{mean} below). We also note that when $B\to 0$, by a Riemann sum argument, the quantity \eqref{eq:magsymb} approaches
\begin{equation}\label{eq:b0}
(4\pi)^{-1} \int_0^\infty dE \int_{\R^2} \left( E+V(x)\right)_- \,dx
= (2\pi)^{-2}  \iint_{\R^2\times\R^2} \left(|p|^2 + V(x)\right)_- \,dx\,dp \,,
\end{equation}
which is the `usual' phase space integral.

While the right side of \eqref{eq:mainnonmag} (up to the constant $r_2$) has the correct limiting behavior when a small parameter $\hbar$ is introduced, it is not useful in the coupled limit $B\to\infty$ and $\hbar\to 0$. This limit is physically relevant, for instance, in the study of neutron stars \cite{LiSoYn}. The magnetic quantity \eqref{eq:magsymb} reproduces the correct behavior in this regime. It is remarkable that this asymptotic profile is, indeed, a uniform, non-asymptotic bound. This is implicitly contained in \cite{LiSoYn2} who use, however, only an approximation of \eqref{eq:magsymb}. Our first result is

\begin{theorem}\label{mainpot}
For any $B>0$ and any $V$ on $\R^2$ one has
\begin{equation}
 \label{eq:mainpot}
\Tr\left( H_B + V \right)_- \leq
\rho_2\ \frac{B}{2\pi} \sum_{m=0}^\infty \int_{\R^2} \left((2m+1)B + V(x) \right)_- \,dx
\end{equation}
with $\rho_2=3$.
\end{theorem}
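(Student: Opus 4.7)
My plan is to reduce the trace inequality to a Cwikel-Lieb-Rozenblum (CLR) estimate on the counting function of negative eigenvalues, and then to prove the latter by combining the Birman-Schwinger principle with the Landau-level decomposition of $H_B$.

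The first step is the Aizenman-Lieb layer-cake identity
\[
\Tr(H_B + V)_- = \int_0^\infty N_-(H_B + V + \tau)\, d\tau,
\]
together with the analogous formula
\[
\bigl((2m+1)B + V(x)\bigr)_- = \int_0^\infty \mathbf{1}\bigl\{(2m+1)B + V(x) + \tau < 0\bigr\}\, d\tau.
\]
Substituting $V \mapsto V + \tau$ and integrating in $\tau$, inequality~\eqref{eq:mainpot} becomes equivalent to the uniform CLR-type bound
\[
N_-(H_B + V) \leq 3 \cdot \frac{B}{2\pi} \int_{\R^2} \#\bigl\{m \geq 0 : (2m+1)B + V(x) < 0\bigr\}\, dx.
\]

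For this counting bound, I would first reduce to $V = -U$ with $U \geq 0$ by the min-max principle, and then apply the Birman-Schwinger principle, using the positivity of $H_B$, to obtain
\[
N_-(H_B - U) \leq N\bigl(U^{1/2} H_B^{-1} U^{1/2},\, 1\bigr),
\]
where $N(K, s)$ denotes the number of eigenvalues of $K$ that are at least $s$. Expanding $H_B^{-1} = \sum_{m \geq 0} \frac{1}{(2m+1)B}\, P_m$ through the Landau spectral decomposition (with $P_m$ the $m$-th Landau projection) yields
\[
U^{1/2} H_B^{-1} U^{1/2} = \sum_{m \geq 0} \frac{1}{(2m+1)B}\, U^{1/2} P_m U^{1/2},
\]
a weighted sum of Berezin-Toeplitz operators. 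The per-level input is a Berezin-Lieb upper bound combined with the fact that $P_m(x,x) \equiv B/(2\pi)$, which should yield a control of the form
\[
N(P_m U P_m, s) \leq C\, \frac{B}{2\pi}\,\bigl|\{x \in \R^2 : U(x) > s\}\bigr|.
\]

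The main obstacle is to combine these per-level estimates into the CLR bound with constant $3$. A naive distribution of the Birman-Schwinger threshold $1$ in proportion to $1/(2m+1)$ fails because the harmonic series $\sum 1/(2m+1)$ diverges. Instead, the distribution must be performed pointwise in $x$, respecting the ``active'' Landau levels $\{m : (2m+1)B < U(x)\}$, whose cardinality is precisely what appears on the right-hand side of the target bound. Moreover, the off-diagonal blocks $P_m U P_n$ for $m \neq n$ must be absorbed into the diagonal ones, most plausibly via a Cauchy-Schwarz-type trace inequality exploiting the spectral gap $2B$ of $H_B$. The specific constant $3$ should emerge from carefully balancing these contributions.
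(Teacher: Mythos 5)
Your reduction collapses at the very first step, and not for a fixable reason: the CLR-type counting bound that you integrate in $\tau$ is false for \emph{every} constant, not just for $3$. Take $V=-B-U$ with $U\geq 0$ bounded, compactly supported and $U\not\equiv 0$. The lowest Landau projection $P_0$ has kernel functions of the form (entire function)$\times$Gaussian, so $P_0UP_0f=0$ forces $f\equiv 0$; hence the compact non-negative operator $P_0UP_0$ has infinitely many positive eigenvalues $\mu_1\geq\mu_2\geq\dots>0$. On the span of its first $n$ eigenfunctions the quadratic form of $H_B-U$ is at most $(B-\mu_n)\|f\|^2$, which lies below the bottom $B$ of the essential spectrum, so by the min--max principle $H_B-U$ has infinitely many eigenvalues below $B$, i.e. $N_-(H_B+V)=\infty$. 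On the other hand the magnetic phase-space quantity at $\gamma=0$, namely $\frac{B}{2\pi}\sum_{m\ge0}|\{x: U(x)>2mB\}|$, is finite. The same example kills the version you actually need: for this $V$ and $\tau\downarrow 0$, $N_-(H_B+V+\tau)\to\infty$ while the right-hand side stays bounded, so no uniform-in-$\tau$ counting inequality can exist, and consequently no Birman--Schwinger/Toeplitz analysis (however the off-diagonal blocks are handled) can produce it. This is exactly the phenomenon the paper records after \eqref{eq:mainpotal}: for $0\le\gamma<1$ there is no admissible constant $\rho_2$ at all; Aizenman--Lieb monotonicity lets one pass from smaller to larger $\gamma$, so the true inequality at $\gamma=1$ cannot be reached from $\gamma=0$.

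The paper avoids counting functions altogether and works directly at the level of the trace ($\gamma=1$): it proves a Rumin-type kinetic-energy inequality $\Tr H_B\gamma\ge 3\int_{\R^2} j(\rho_\gamma(x)/3)\,dx$, where $j$ is the piecewise affine function whose Legendre transform is $-\frac{B}{2\pi}\sum_m((2m+1)B+v)_-$, and then obtains \eqref{eq:mainpot} by duality, applied to the spectral projection onto the negative spectrum of $H_B+V$. If you want to salvage your Landau-level philosophy, it has to be implemented on sums of eigenvalues (or on the kinetic energy of a density matrix, as in Section 2), not on their number.
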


Hence, up to the moderate increase from $r_2=\pi/\sqrt3\approx 1.81$ to $\rho_2=3$, we have found a magnetic analogue of \eqref{eq:mainnonmag} which reflects the desired diamagnetic behavior \eqref{eq:diamag}. An important ingredient in our proof is a method developed recently by Rumin \cite{Ru} to derive kinetic energy inequalities; see Subsection \ref{sec:kinen}.

\bigskip

Similarly as in the non-magnetic case, one might ask for the optimal value of the constant $\rho_2$. By the semi-classical result mentioned above one necessarily has $\rho_2\geq 1$. A first result in this direction was obtained in \cite{FrLoWe} (extending previous work of \cite{ErLoVo}), where it was shown that if one takes $V$ to be constant on a set of finite measure and plus infinity otherwise, then \eqref{eq:mainpot} holds with $\rho_2=1$. Our second main result is an analogous optimal bound for a harmonic oscillator.

\begin{theorem}\label{main}
For any $B>0$, $\omega_1>0$, $\omega_2>0$ and $\mu>0$, inequality \eqref{eq:mainpot} holds with $\rho_2=1$ for $V(x) = \omega_1^2x_1^2+\omega_2^2x_2^2-\mu$.
\end{theorem}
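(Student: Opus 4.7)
The plan is to exploit the quadratic nature of $H_B + V$ to diagonalize the operator exactly, bound the resulting two-dimensional Riesz sum via a sharp one-dimensional harmonic-oscillator Lieb-Thirring identity applied in the inner sum, and finish by a monotonicity property of a single-variable auxiliary function.

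Since $H_B + \omega_1^2 x_1^2 + \omega_2^2 x_2^2$ is the Weyl quantization of a positive-definite quadratic form on phase space, Williamson's theorem combined with the metaplectic representation identifies it, up to unitary equivalence, with $\Omega_+(-\partial_{y_1}^2 + y_1^2) + \Omega_-(-\partial_{y_2}^2 + y_2^2)$. A direct computation of the Gram matrix of the symbol yields symplectic eigenvalues satisfying
\begin{equation*}
\Omega_+ \Omega_- = \omega_1 \omega_2, \qquad \Omega_+^2 + \Omega_-^2 = \omega_1^2 + \omega_2^2 + B^2,
\end{equation*}
so $\Omega_+ - \Omega_- = \sqrt{(\omega_1 - \omega_2)^2 + B^2} \geq B$, and the spectrum is $\{\Omega_+(2n+1) + \Omega_-(2m+1) : n,m\geq 0\}$. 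For the right-hand side of \eqref{eq:mainpot}, the change of variables $y_i = \omega_i x_i$ converts each ellipse to a disc and gives $\int_{\R^2}((2k+1)B + V)_-\,dx = \pi(\mu-(2k+1)B)_+^2/(2\omega_1\omega_2)$, so the right-hand side equals $\frac{B}{4\omega_1\omega_2}\sum_{k\geq 0}(\mu-(2k+1)B)_+^2$.

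Next I would invoke the elementary one-dimensional harmonic-oscillator Lieb-Thirring identity
\begin{equation*}
\sum_{m=0}^\infty ((2m+1)\Omega - c)_- \leq \frac{c_+^2}{4\Omega},
\end{equation*}
proved by writing $c = (2M+1+\theta)\Omega$ with $\theta \in [0,2)$, where the explicit difference of right minus left equals $\Omega(1-\theta)^2/4 \geq 0$. Applying this term-by-term in the inner sum (with $c = \mu - (2n+1)\Omega_+$ and $\Omega = \Omega_-$) yields
\begin{equation*}
\Tr(H_B+V)_- \leq \frac{1}{4\Omega_-}\sum_{n\geq 0}(\mu-(2n+1)\Omega_+)_+^2 = \frac{g(\Omega_+)}{4\,\Omega_+\Omega_-},
\end{equation*}
where $g(a) := a\sum_{n\geq 0}(\mu-(2n+1)a)_+^2$; since $\Omega_+\Omega_- = \omega_1\omega_2$, the desired bound reduces to $g(\Omega_+) \leq g(B)$.

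As $\Omega_+ \geq B$, it suffices to prove $g$ is nonincreasing on $(0,\infty)$. Let $N(a) := \lfloor (\mu/a - 1)/2 \rfloor$; on each interval where $N$ is constant, differentiation (using $\sum_{n=0}^N(2n+1)^2 = (N+1)(2N+1)(2N+3)/3$) gives
\begin{equation*}
g'(a) = (N+1)\, a^2 \,\bigl((\mu/a)^2 - 4(N+1)(\mu/a) + (2N+1)(2N+3)\bigr),
\end{equation*}
and the bracketed quadratic in $x = \mu/a$ factors as $(x - (2N+1))(x - (2N+3))$, which is nonpositive on $[2N+1, 2N+3]$ — precisely the range of $\mu/a$ when $N(a) = N$. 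The term appearing/disappearing at a transition vanishes to second order, so $g$ is in fact $C^1$ and globally nonincreasing. This algebraic coincidence — that the quadratic's roots sit exactly at the endpoints of each constant-$N$ interval — is the main technical observation; the symplectic diagonalization and the one-dimensional Lieb-Thirring identity are otherwise routine.
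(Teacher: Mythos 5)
Your proposal is correct and follows essentially the same route as the paper: explicit diagonalization of the quadratic Hamiltonian into two oscillators with frequencies $\Omega_\pm=Ba_\pm$ (so $\Omega_+\Omega_-=\omega_1\omega_2$, $\Omega_+\geq B$), the sharp one-dimensional bound for the inner sum (the paper's Lemma \ref{mean}), and monotonicity in the frequency of $a\mapsto a\sum_n(\mu-(2n+1)a)_+^2$. Your direct differentiation of $g$, with the factorization $(x-(2N+1))(x-(2N+3))$, is exactly the paper's Lemma \ref{mono} specialized to $\phi(t)=(T-t)_+^2$, which is the case the paper itself reduces to.
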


In particular, letting $B\to 0$ and using the limit in \eqref{eq:b0} we recover the known bounds in the non-magnetic case from \cite{dB,La2}. Even though the eigenvalues of a harmonic oscillator in a homogeneous magnetic field are explicitly known (Lemma \ref{explicit}), the proof of Theorem \ref{main} relies on a delicate property of a subclass of convex functions (Lemma~\ref{mono}) which, we feel, could be useful even beyond the context of this paper.

\subsection*{Moments of eigenvalues}

Using some by now standard techniques we derive a few consequences of Theorems \ref{mainpot} and \ref{main}. First, following Aizenman and Lieb \cite{AiLi} one can replace $V$ by $V-\mu$ in \eqref{eq:mainpot} and integrate with respect to $\mu$ to obtain that for any $\gamma\geq 1$
\begin{equation}\label{eq:mainpotal}
\Tr\left( H_B + V \right)_-^\gamma \leq
\rho_2\ \frac{B}{2\pi} \sum_{m=0}^\infty \int_{\R^2} \left((2m+1)B + V(x) \right)_-^\gamma \,dx\,,
\end{equation}
where $\rho_2=3$ for general $V$ and $\rho_2=1$ for $V(x) = \omega_1^2x_1^2+\omega_2^2x_2^2-\mu$. The restriction $\gamma\geq 1$ is necessary, since one easily checks that for $0\leq\gamma<1$ there is no constant $\rho_2$ such that \eqref{eq:mainpotal} holds for all potentials $V$. Restricting ourselves to the quadratic case we shall show in Subsection \ref{sec:gamma-}

\begin{proposition}
\label{sharpness}
For any $0\leq\gamma<1$ there are $B>0$, $\mu>0$ and $\omega_1=\omega_2$ such that for $V(x)=\omega_1^2x_1^2+\omega_2^2x_2^2-\mu$ one has
\begin{equation}
 \label{eq:gamma-}
\Tr\left( H_B + V \right)_-^\gamma >
\frac{B}{2\pi} \sum_{m=0}^\infty \int_{\R^2} \left((2m+1)B + V(x) \right)_-^\gamma \,dx \,.
\end{equation}
\end{proposition}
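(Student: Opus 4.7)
The plan is to exhibit, for each fixed $\gamma\in[0,1)$, explicit parameters making \eqref{eq:gamma-} hold. After the rescaling $x\mapsto x/\sqrt B$ I may assume $B=1$, then pick a large integer $N$ (to be chosen as a function of $\gamma$) and set $\omega_1=\omega_2=\omega_N:=\sqrt{2(2N+3)}/(2N+1)$ and $\mu=3$. Then $\Omega_\pm:=\sqrt{1/4+\omega_N^2}\pm 1/2$ simplify to $\Omega_+=(2N+3)/(2N+1)$, $\Omega_-=2/(2N+1)$, with $\omega_N^2=\Omega_+\Omega_-=2(2N+3)/(2N+1)^2$. The idea behind this choice is that $\mu=3B$ lies exactly at the threshold of the second Landau level (so only $m=0$ contributes to the right side of \eqref{eq:gamma-}) while simultaneously making the lowest Landau level produce exactly $N$ bound states in an arithmetic progression.

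Next I would compute both sides of \eqref{eq:gamma-}. By Lemma \ref{explicit} the eigenvalues of $H_B+V$ are $(2k_1+1)\Omega_+ + (2k_2+1)\Omega_- - \mu$. For $k_1\geq 1$ the smallest such value is $3\Omega_++\Omega_--3=8/(2N+1)>0$, so these do not contribute. For $k_1=0$ the eigenvalue equals $(4k_2-4N+2)/(2N+1)$, which is negative iff $k_2\leq N-1$; writing $j:=N-k_2$, its magnitude is $(2j-1)\Omega_-$. Hence $\Tr(H_B+V)_-^\gamma=\Omega_-^\gamma\sum_{j=1}^N(2j-1)^\gamma$. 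For the semi-classical side, since $\mu-(2m+1)B\leq 0$ for $m\geq 1$, only $m=0$ contributes; a polar-coordinates computation gives $\int_{\R^2}(2-\omega_N^2|x|^2)_+^\gamma\,dx=\pi\cdot 2^{\gamma+1}/(\omega_N^2(\gamma+1))$, and substituting $\omega_N^2$ yields $2^\gamma(2N+1)^2/[2(2N+3)(\gamma+1)]$ for the full right side of \eqref{eq:gamma-}. Dividing, the ratio I need to control is
\[
\mathcal R_N(\gamma)\;:=\;\frac{2(2N+3)(\gamma+1)}{(2N+1)^{\gamma+2}}\sum_{j=1}^N(2j-1)^\gamma.
\]

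The remaining step is to prove $\mathcal R_N(\gamma)>1$ for $N$ sufficiently large. The function $s\mapsto s^\gamma$ is concave on $(0,\infty)$ for $\gamma\in[0,1]$ and strictly so for $\gamma\in(0,1)$, so the Hermite--Hadamard inequality on each interval $[2j-2,2j]$ gives $2(2j-1)^\gamma\geq \int_{2j-2}^{2j}s^\gamma\,ds$, strictly for $\gamma\in(0,1)$. Summing yields $\sum_{j=1}^N(2j-1)^\gamma\geq (2N)^{\gamma+1}/(2(\gamma+1))$, so
\[
\mathcal R_N(\gamma)\;\geq\;\Big(1+\tfrac{2}{2N+1}\Big)\Big(1-\tfrac{1}{2N+1}\Big)^{\gamma+1}\;=\;1+\frac{1-\gamma}{2N+1}+O(N^{-2}),
\]
strict for $\gamma\in(0,1)$. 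For each $\gamma\in(0,1)$ the leading correction $(1-\gamma)/(2N+1)$ beats the $O(N^{-2})$ error once $N\geq N_0(\gamma)$, with $N_0(\gamma)=O(1/(1-\gamma))$; and for $\gamma=0$ the lower bound simplifies to $\mathcal R_N(0)=(1+2u)(1-u)=1+u(1-2u)>1$ for all $N\geq 1$ (where $u=1/(2N+1)$).

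The main obstacle will be the required dependence $N=O(1/(1-\gamma))$: the $(1-\gamma)$ factor in the leading correction, which vanishes at $\gamma=1$, directly encodes the sharpness $\rho_2=1$ at $\gamma=1$ proved in Theorem \ref{main}. Thus no uniform choice of $N$ (nor of $\mu,\omega$) can work all the way up to $\gamma=1$, and some care is required to make the $N_0(\gamma)$ estimate explicit.
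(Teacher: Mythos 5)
Your proposal is correct and follows essentially the same route as the paper: the same counterexample regime $\mu=3B$, $\omega_1=\omega_2$ with $\omega/B\to 0$ (your integer $N$ playing the role of the paper's small parameter $t$ and of its choice of $\sigma$ making the relevant quantity an integer), the same midpoint/concavity estimate for $\sum(2j-1)^\gamma$ (the paper's Remark \ref{mean1}), and the same leading correction proportional to $(1-\gamma)$. Your version is merely a more explicit parametrization of the paper's asymptotic argument.
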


Our counterexample appears in the limit $\omega_j/B\to 0$ (with $\mu/B=3$ fixed).

\subsection*{Three dimensions}

Next, we shall show that our bounds for $d=2$ can be applied to deduce analogous bounds for $d=3$. This argument is in the spirit of the lifting argument from \cite{La1,La2,LaWe}. We denote by $\hat H_{B} = H_{B} - \frac{\partial^2}{\partial x_3^2}$ the Landau Hamiltonian in $L^2(\R^3)$.

\begin{corollary}\label{mainpot3d}
For any $B>0$ and any $V$ on $\R^3$, one has
\begin{equation}
 \label{eq:mainpot3d}
\Tr\left( \hat H_B + V \right)_- \leq
\rho_3 \ \frac{B}{(2\pi)^2} \sum_{m=0}^\infty \int_{\R^3\times\R} \left((2m+1)B +p_3^2 + V(x) \right)_- \,dx\,dp_3
\end{equation}
with $\rho_3=\sqrt 3\, \pi$.
\end{corollary}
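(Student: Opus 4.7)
The approach is the operator-valued lifting argument of Laptev--Weidl, combined with an Aizenman--Lieb upgrade of Theorem~\ref{mainpot}. The starting observation is the decomposition
\[
\hat H_B + V = -\frac{\partial^2}{\partial x_3^2} + A(x_3), \qquad A(x_3) := H_B + V(\cdot, x_3),
\]
in which $A(x_3)$ is, for each fixed $x_3 \in \R$, a self-adjoint operator on $L^2(\R^2)$, while only $-\partial^2/\partial x_3^2$ involves the third variable.

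First, I would apply the Aizenman--Lieb argument to Theorem~\ref{mainpot} to obtain the $\gamma = 3/2$ instance of \eqref{eq:mainpotal}, namely
\[
\Tr(H_B + W)_-^{3/2} \leq \rho_2\, \frac{B}{2\pi} \sum_{m=0}^\infty \int_{\R^2} \bigl((2m+1)B + W(x)\bigr)_-^{3/2}\, dx
\]
for an arbitrary scalar potential $W$ on $\R^2$. Next, I would invoke the one-dimensional operator-valued Lieb--Thirring inequality at $\gamma = 1$,
\[
\Tr\bigl(-\partial_t^2 + A(t)\bigr)_- \leq C \int_\R \Tr A(t)_-^{3/2}\, dt,
\]
valid for any family of self-adjoint operators $A(t)$ on a fixed auxiliary Hilbert space, with the constant $C = 2/(3\sqrt{3}) = (\pi/\sqrt{3})L^{\rm cl}_{1,1}$. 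Applying the latter to the family $A(x_3)$ and then inserting the former with $W = V(\cdot, x_3)$ yields
\[
\Tr\bigl(\hat H_B + V\bigr)_- \leq C\, \rho_2\, \frac{B}{2\pi} \sum_{m=0}^\infty \int_{\R^3} \bigl((2m+1)B + V(x, x_3)\bigr)_-^{3/2}\, dx\, dx_3.
\]

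To convert this into the phase-space form of \eqref{eq:mainpot3d}, I would then apply the elementary identity $(a)_-^{3/2} = (3/4) \int_\R (a + p^2)_-\, dp$ with $a = (2m+1)B + V(x, x_3)$, which reintroduces a momentum integral over $p_3 \in \R$. Bookkeeping the prefactors gives $\rho_3 = C\, \rho_2\, (3\pi/2)$, and with $\rho_2 = 3$ and $C = 2/(3\sqrt{3})$ this simplifies to $\rho_3 = \sqrt{3}\,\pi$, as claimed. The main obstacle will be the one-dimensional operator-valued Lieb--Thirring bound with the precise constant $C = (\pi/\sqrt{3}) L^{\rm cl}_{1,1}$: the Hundertmark--Laptev--Weidl sharp $\gamma = 1/2$ inequality combined with Aizenman--Lieb yields only the weaker value $4/(3\pi)$, which would translate into $\rho_3 = 6 > \sqrt{3}\,\pi$, so one must instead use the improvement of Dolbeault--Laptev--Loss (the operator-valued extension of the Eden--Foias constant).
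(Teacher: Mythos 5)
Your proposal is correct and is essentially the paper's own argument: both proofs rest on the one-dimensional operator-valued Lieb--Thirring inequality of Dolbeault--Laptev--Loss at $\gamma=1$ (you rightly note that the Hundertmark--Laptev--Weidl route only gives $\rho_3=6$) combined with Theorem \ref{mainpot}, and your bookkeeping $\rho_3=C\,\rho_2\,(3\pi/2)=\sqrt3\,\pi$ is accurate. Your detour through the $\gamma=3/2$ moment bound \eqref{eq:mainpotal} (via Aizenman--Lieb) and back through the identity $(a)_-^{3/2}=\tfrac34\int_\R (a+p_3^2)_-\,dp_3$ is just the paper's pointwise-in-$(x_3,p_3)$ application of Theorem \ref{mainpot} with the $p_3$-integration carried out in a different order.
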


\begin{proof}
From the operator-valued Lieb-Thirring inequality of \cite{DLL} we know that
$$
\Tr\left( \hat H_B + V \right)_- 
\leq \frac\pi{\sqrt 3} \iint_{\R^2} \Tr_{L^2(\R^2)} \left( H_B + p_3^2 + V(\cdot,x_3) \right)_- \frac{dx_3\,dp_3}{2\pi} \,.
$$
Inequality \eqref{eq:mainpot3d} is therefore a consequence of Theorem \ref{mainpot}.
\end{proof}

For the harmonic oscillator we have

\begin{corollary}\label{main3d}
For any $B>0$, $\omega_1>0$, $\omega_2>0$, $\omega_3>0$ and $\mu>0$, inequality \eqref{eq:mainpot3d} holds with $\rho_3=1$ for $\hat V(x) = \omega_1^2x_1^2+\omega_2^2x_2^2 +\omega_3^2x_3^2-\mu$.
\end{corollary}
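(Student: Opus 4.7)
The plan is to separate variables in $x_3$ and reduce to Theorem \ref{main}. Observe that
\[\hat H_B + \hat V = \bigl(H_B + \omega_1^2 x_1^2 + \omega_2^2 x_2^2\bigr) \otimes I + I \otimes \bigl(-\partial^2/\partial x_3^2 + \omega_3^2 x_3^2\bigr) - \mu.\]
The one-dimensional harmonic oscillator $-\partial^2/\partial x_3^2 + \omega_3^2 x_3^2$ admits an orthonormal eigenbasis of $L^2(\R)$ with eigenvalues $(2n+1)\omega_3$, $n\geq 0$, so diagonalizing in this basis gives
\[\Tr(\hat H_B + \hat V)_- = \sum_{n=0}^\infty \Tr_{L^2(\R^2)}\bigl(H_B + \omega_1^2 x_1^2 + \omega_2^2 x_2^2 - \mu_n\bigr)_-, \qquad \mu_n := \mu - (2n+1)\omega_3.\]
For $\mu_n > 0$, Theorem \ref{main} bounds each summand by the sharp 2D semi-classical quantity; for $\mu_n \leq 0$ both sides are trivially zero, since $(2m+1)B + \omega_1^2 x_1^2 + \omega_2^2 x_2^2 - \mu_n \geq B > 0$.

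It remains to bound the resulting double sum by the phase-space integral on the right-hand side of \eqref{eq:mainpot3d}. Fixing $m$, $x_1$, $x_2$ and setting $a := (2m+1)B + \omega_1^2 x_1^2 + \omega_2^2 x_2^2 - \mu$, this reduces to the one-dimensional estimate
\[\sum_{n=0}^\infty \bigl(a + (2n+1)\omega_3\bigr)_- \leq \frac{1}{2\pi}\iint_{\R^2}\bigl(a + p_3^2 + \omega_3^2 x_3^2\bigr)_-\, dx_3\, dp_3,\]
valid for every $a \in \R$. This is the exact analogue for a 1D harmonic oscillator of the Landau-level convexity inequality underlying Lemma \ref{mean}: the function $E \mapsto (a+E)_-$ is convex, the points $(2n+1)\omega_3$ are the midpoints of intervals of length $2\omega_3$, and the integral over $(x_3, p_3)$ reduces to $\tfrac{1}{2\omega_3}\int_0^\infty (a+E)_-\, dE$ via polar-type coordinates (the sub-level set $\{p_3^2 + \omega_3^2 x_3^2 \leq E\}$ has area $\pi E/\omega_3$). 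Both bounds therefore equal $(a)_-^2/(4\omega_3)$ up to the midpoint discrepancy.

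Assembling the three displays produces exactly \eqref{eq:mainpot3d} with $\rho_3 = 1$. No step presents a real obstacle; the only thing to check is that the final convexity step is sharp enough to preserve the constant $\rho_3 = 1$. This works because the 1D oscillator spectrum is arithmetic with spacing $2\omega_3$ precisely matching the density of states of the corresponding 2D phase space, so the midpoint rule introduces no additional constant beyond what is already packaged into Lemma \ref{mean}.
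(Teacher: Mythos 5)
Your proposal is correct and follows essentially the same route as the paper: separate variables in $x_3$, apply Theorem \ref{main} to each two-dimensional slice $H_B+\omega_1^2x_1^2+\omega_2^2x_2^2-\mu_n$, and then control the resulting sum over the oscillator levels $(2n+1)\omega_3$ by the sharp one-dimensional semi-classical bound, which the paper likewise obtains from Lemma \ref{mean} (citing \cite{dB,La2}, with the same midpoint-convexity justification you give). The only cosmetic difference is that the paper packages the $n$-sum as a one-dimensional trace $\Tr_{L^2(\R)}(H+(2m+1)B+V-\mu)_-$ before invoking that bound, whereas you apply the convexity estimate level by level.
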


\begin{proof}
We denote by $E_j$ the eigenvalues of the one-dimensional harmonic oscillator $H= - \frac{\partial^2}{\partial x_3^2} + \omega_3^2x_3^2$. Then, since $\hat H_{B} + V = \left(H_{B} + V\right) \otimes I + I\otimes H$ with $V(x_1,x_2)=\omega_1^2x_1^2+\omega_2^2x_2^2$, we have
$$
\Tr_{L^2(\R^3)}\left( \hat H_{B} + \hat V \right)_- = \sum_j \Tr_{L^2(\R^2)}\left( H_{B} + V + E_j -\mu \right)_- \,.
$$
According to Theorem \ref{main} (which trivially holds for $\mu\leq 0$ as well), this is bounded from above by
\begin{align*}
& \frac{B}{2\pi} \sum_j \sum_{m=0}^\infty \int_{\R^2} \left((2m+1)B + V(x_1,x_2) +E_j-\mu \right)_- \,dx_1\,dx_2 \\
&\qquad = \frac{B}{2\pi} \sum_{m=0}^\infty \int_{\R^2} \Tr_{L^2(\R)} \left(H + (2m+1)B + V(x_1,x_2) -\mu \right)_- \,dx_1\,dx_2 \,.
\end{align*}
Next, we shall use that $H$ satisfies a Lieb-Thirring inequality with semi-classical constant \cite{dB,La2}, that is, for any $\Lambda\in\R$,
$$
\Tr_{L^2(\R)} \left(H - \Lambda \right)_- 
\leq \frac1{2\pi} \iint_{\R\times\R} \left(p_3^2 + \omega_3^2x_3^2 -\Lambda\right)_- \,dx_3\,dp_3 \,.
$$
(This can also be seen from Lemma \ref{mean} and recalling the explicit form of the eigenvalues of $H$.) It follows that for every fixed $(x_1,x_2)$
\begin{align*}
& \Tr_{L^2(\R)} \left(H + (2m+1)B + V(x_1,x_2) -\mu \right)_- \\
& \qquad \leq \frac1{2\pi} \iint_{\R\times\R} \left( p_3^2 + \omega_3^2x_3^2 + (2m+1)B+ V(x_1,x_2) -\mu \right)_- \,dx_3\,dp_3 \,,
\end{align*}
which proves the claimed bound.
\end{proof}

\begin{remark}
The previous proof shows that \eqref{eq:mainpot3d} with $\rho_3=1$ is valid for more general potentials $\hat V(x) = V(x_1,x_2) +v(x_3),$ where $V(x_1,x_2)=\omega_1^2x_1^2+\omega_2^2x_2^2$ and where $v$ is such that $\Tr_{L^2(\R)} (-\frac{d^2}{dx_3^2} +v(x_3)-\Lambda)_- \leq \frac1{2\pi} \iint_{\R\times\R} (p_3^2+v(x_3)-\Lambda)_- \,dx_3\,dp_3$ for all $\Lambda$.
\end{remark}

A similar argument as in the proofs of Corollaries \ref{mainpot3d} and \ref{main3d} (based on the operator-valued Lieb-Thirring inequalities of \cite{HLW,LaWe}) shows that for general $V$ one has
\begin{equation}
 \label{eq:mainpot3dal}
\Tr\left( \hat H_B + V \right)_-^\gamma \leq
\rho_{3,\gamma}\ \frac{B}{(2\pi)^2} \sum_{m=0}^\infty \int_{\R^3\times\R} \left((2m+1)B +p_3^2 + V(x) \right)_-^\gamma \,dx\,dp_3
\end{equation}
with $\rho_{3,\gamma} = 6$ if $\gamma\geq 1/2$, with $\rho_{3,\gamma} = \pi\sqrt 3$ if $\gamma\geq 1$ and with $\rho_{3,\gamma}=3$ if $\gamma\geq 3/2$. Moreover, in the special case of $\hat V(x) = \omega_1^2x_1^2+\omega_2^2x_2^2 +\omega_3^2x_3^2-\mu$, \eqref{eq:mainpot3dal} holds with $\rho_3=1$ for $\gamma\geq 1$ and with $\rho_{3,\gamma}=2 \left( \gamma/(\gamma+1)\right)^\gamma$ for $0\leq\gamma<1$. The latter follows from the fact \cite{FrLoWe} that
$$
\Tr_{L^2(\R)} \left( - \frac{d^2}{d x_3^2} + \omega_3^2x_3^2 - \Lambda \right)_-^\gamma 
\leq 2 \left(\frac \gamma{\gamma+1}\right)^\gamma \frac1{2\pi} \iint_{\R\times\R} \left(p_3^2 + \omega_3^2x_3^2 -\Lambda\right)_-^\gamma \,dx_3\,dp_3 \,.
$$


\section{Proof of Theorem \ref{mainpot}}

\subsection{A kinetic energy inequality}\label{sec:kinen}

We define a piecewise afine function $j:[0,\infty)\to[0,\infty)$ by
$$
j(\rho)=\frac{B^2}{2\pi} \left( L^2+ (2L+1) r\right)
\quad\text{if}\ \rho=\frac{B}{2\pi}(L+r),\ L\in\N_0,\ r\in[0,1) \,.
$$
We note that $j$ is continuous, increasing and convex. One has $j(\rho)=B\rho$ if $\rho\leq B/(2\pi)$ and $j(\rho)\sim 2\pi\rho^2$ if $\rho\gg B$. The connection between this function and the right side of \eqref{eq:mainpot} will become clearer in the next subsection.

\begin{theorem}\label{kinen}
 Let $0\leq\gamma\leq 1$ be a density matrix on $L^2(\R^2)$ with finite kinetic energy. Then
 $$
 \tr H_B \gamma \geq 3 \int_{\R^2} j(\rho_\gamma(x)/3) \,dx \,,
 $$
 where $\rho_\gamma(x)=\gamma(x,x)$.
\end{theorem}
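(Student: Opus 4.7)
The plan is to apply M.~Rumin's method \cite{Ru}: first, a pointwise trace lemma; second, layer-cake integration; third, a one-variable function comparison.

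The pointwise bound is: for any orthogonal projection $\Pi$ on $L^2(\R^2)$ and any $0\leq\gamma\leq 1$,
$$
\tr\bigl((I-\Pi)\gamma\bigr)\geq\int_{\R^2}\bigl(\sqrt{\rho_\gamma(x)}-\sqrt{\rho_\Pi(x)}\bigr)_+^2\,dx.
$$
The proof is, formally, the triangle inequality $\sqrt{\rho_\gamma(x)}=\|\gamma^{1/2}\delta_x\|\leq\|\Pi\gamma^{1/2}\delta_x\|+\|(I-\Pi)\gamma^{1/2}\delta_x\|$ together with $\|\Pi\gamma^{1/2}\delta_x\|\leq\|\Pi\delta_x\|=\sqrt{\rho_\Pi(x)}$ (from $\|\gamma\|_\ope\leq 1$), followed by integration in $x$; one makes it rigorous by expanding in an eigenbasis of $\gamma$. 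Combined with the layer cake $H_B=\int_0^\infty\chi(H_B>E)\,dE$ --- on $E\in[(2L-1)B,(2L+1)B)$ the spectral projection is $I-\Pi_L$, where $\Pi_L$ is the sum of the first $L$ Landau-level projections ($\Pi_0:=0$) and has constant diagonal $\rho_{\Pi_L}(x)=LB/(2\pi)$ --- this yields
$$
\tr H_B\gamma\geq\int_{\R^2}F(\rho_\gamma(x))\,dx,\qquad F(\rho):=B\rho+2B\sum_{L\geq 1}\bigl(\sqrt\rho-\sqrt{LB/(2\pi)}\bigr)_+^2.
$$

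It remains to prove the scalar inequality $F(\rho)\geq 3j(\rho/3)$. Since $3j(\cdot/3)$ is piecewise affine and convex, it is the upper envelope of its tangent lines,
$$
3j(\rho/3)=\sup_{L\in\N_0}\bigl[(2L+1)B\rho-3L(L+1)B^2/(2\pi)\bigr],
$$
so it suffices to show, for each $L\in\N_0$, that $F(\rho)\geq(2L+1)B\rho-3L(L+1)B^2/(2\pi)$. Setting $s=2\pi\rho/B$ and $u=\sqrt s$, the difference is, on the interval $s\in[K,K+1)$ (with $K:=\lfloor s\rfloor$), a quadratic polynomial in $u$ with minimum at $u^*=S(K)/(K-L)$, where $S(K):=\sum_{k=1}^K\sqrt k$; a short asymptotic analysis shows that $u^*$ falls in the interval corresponding to $K=3L+1$, and nonnegativity of the minimum reduces to the finite-sum bound $S(3L+1)^2\leq(2L+1)(6L^2+6L+1)$.

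This last finite-sum bound is the main obstacle, being asymptotically tight: $S(3L+1)^2\sim 12L^3+18L^2$ while the right-hand side equals $12L^3+18L^2+8L+1$. A straightforward induction (based on the trapezoidal under-estimate of $\int_K^{K+1}\sqrt x\,dx$) gives $S(K)\leq\tfrac{2}{3}K^{3/2}+\tfrac{1}{2}\sqrt K$, but this crude estimate overshoots the target by $O(L)$. The proof therefore needs the next Euler--Maclaurin term $\zeta(-\tfrac{1}{2})\approx-0.208$, together with careful control of the $O(K^{-1/2})$ remainder; this refined estimate of partial sums of $\sqrt k$ is the technical heart of the argument.
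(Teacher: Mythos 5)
Your first two steps (the Rumin-type pointwise bound $\rho^E_\gamma(x)\geq(\sqrt{\rho_\gamma(x)}-\sqrt{\rho_{\Pi}(x)})_+^2$ and the layer-cake integration over the Landau levels, yielding $\tr H_B\gamma\geq\int F(\rho_\gamma)\,dx$ with $F=j_R$) coincide with the paper's argument and are fine. Where you diverge is the scalar comparison $F(\rho)\geq 3j(\rho/3)$: your idea of writing $3j(\rho/3)$ as the upper envelope of its supporting lines $(2L+1)B\rho-3L(L+1)B^2/(2\pi)$ and checking each line separately is a legitimate alternative to the paper's direct computation (which writes $3\rho=(B/2\pi)(K+s)$, $K=3L+m$, and verifies one explicit inequality in $K,s$). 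Your reduction is also algebraically correct: on the interval $s\in[K,K+1)$ the difference is the quadratic $2(K-L)u^2-4S(K)u+K(K+1)+3L(L+1)$ in $u=\sqrt s$, and at $K=3L+1$ nonnegativity of its minimum is exactly $S(3L+1)^2\leq(2L+1)(6L^2+6L+1)$.

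However, the proof is not complete, in two places. First and mainly, the decisive finite-sum bound is asserted, not proved: you end by saying it requires the Euler--Maclaurin constant $\zeta(-\tfrac12)$ and ``careful control of the remainder,'' which you do not supply. Moreover this diagnosis is mistaken: the same concavity/trapezoid argument you invoke, applied between $1$ and $K$ and keeping the constant, gives $S(K)\leq\int_1^K\sqrt t\,dt+\tfrac{1+\sqrt K}{2}=\tfrac23K^{3/2}+\tfrac12\sqrt K-\tfrac16$ (this is exactly the estimate used in the paper); squaring and inserting $K=3L+1$, the needed inequality reduces to $\tfrac34L+\tfrac{7}{18}\leq\tfrac29(3L+1)^{3/2}+\tfrac16(3L+1)^{1/2}$, which holds for all $L\geq1$ (and $L=0$ is the equality case $S(1)^2=1$). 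Your weaker version without the $-\tfrac16$ is what ``overshoots by $O(L)$''; no zeta-function refinement is needed. Second, the claim that the unconstrained minimizer $u^*=S(K)/(K-L)$ lies in the interval belonging to $K=3L+1$ is justified only by ``a short asymptotic analysis,'' which does not cover all $L$; you should either verify it for every $L$ (e.g.\ by the same elementary bounds on $S(3L+1)$, together with the observation that $j_R-\ell_L$ is convex so only its global minimum matters), or check the analogous quadratic condition on whichever intervals could contain the minimum. As it stands, the heart of the scalar inequality --- the step that replaces the paper's Lemma on $j_R(\rho)\geq 3j(\rho/3)$ --- is left open.
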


It is easy to see that $3\, j(\rho/3)\geq (1/3)\, j(\rho)$ for all $\rho\geq 0$, and therefore we also have
$$
\tr H_B \gamma \geq (1/3) \int_{\R^2} j(\rho_\gamma(x)) \,dx \,.
$$

\begin{proof}
 The first part of our proof follows the method introduced by Rumin \cite{Ru}. We define $j_R:[0,\infty)\to[0,\infty)$ by
$$
j_R(\rho)=B\rho + 2B \sum_{k=1}^\infty \left(\sqrt\rho - \sqrt{\tfrac{Bk}{2\pi}}\right)_+^2 \,.
$$
We note that $j_R$ is differentiable and convex, $j_R(\rho)=B\rho$ if $\rho\leq B/(2\pi)$ and 
$j_R(\rho)\sim 2\pi\rho^2/3$ if $\rho\gg B$. We shall first show that
\begin{equation}\label{eq:rumin}
 \tr H_B \gamma \geq \int_{\R^2} j_R(\rho_\gamma(x)) \,dx \,.
\end{equation}
In the second part of our proof (see Lemma \ref{rumcomp}) we show that $j_R(\rho)\geq 3\ j(\rho/3)$ for all $\rho\geq 0.$

For the proof of \eqref{eq:rumin} we write
\begin{equation}\label{eq:layer}
\tr H_B \gamma = \int_0^\infty \tr\left( P^E \gamma\right) \,dE = \int_{\R^2} \int_0^\infty \rho^E_\gamma(x) \,dE\,dx \,,
\end{equation}
where $P^E$ is the spectral projection of $H_B$ corresponding to the interval $[E,\infty)$ and where $\rho^E_\gamma(x)=(P^E\gamma P^E) (x,x)$. It is well-known that
$$
(1-P^E)(x,x)=\frac{B}{2\pi}\#\{m\in\N_0:\ (2m+1)B < E\} \,.
$$
The same clever use of the triangle inequality as in \cite{Ru} leads to the pointwise lower bound
$$
 \rho^E_\gamma(x) \geq \left( \sqrt{\rho_\gamma(x)} - \sqrt{ \frac{B}{2\pi}\#\{m\in\N_0:\ (2m+1)B < E\} } \right)_+^2 \,.
$$
Inserting this bound in \eqref{eq:rumin} we obtain
\begin{align*}
\tr H_B \gamma 
& \geq \int_{\R^2} \left( \int_0^B \rho_\gamma(x) \,dE + \sum_{k=1}^\infty \int_{(2k-1)B}^{(2k+1)B}
\left( \sqrt{\rho_\gamma(x)} - \sqrt{ \frac{Bk}{2\pi} } \right)_+^2 \,dE \right) \,dx \\
& = \int_{\R^2} j_R(\rho_\gamma(x)) \,dx \,.
\end{align*}
This completes the proof of \eqref{eq:rumin} and also, by Lemma \ref{rumcomp} below, the proof of the theorem.
\end{proof}

\begin{lemma}\label{rumcomp}
 $j_R(\rho)\geq 3\ j(\rho/3)$ for all $\rho\geq 0$.
\end{lemma}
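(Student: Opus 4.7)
The plan is to exploit the tangent-line representation of the piecewise linear convex function $j$, thereby reducing the lemma to a countable family of scalar convex inequalities. Since $j$ has slope $B(2L+1)$ on $[BL/(2\pi),B(L+1)/(2\pi))$, it equals the supremum of its supporting lines $\ell_L(\rho):=B(2L+1)\rho-B^2L(L+1)/(2\pi)$. Hence $3j(\rho/3)=\sup_{L\geq 0}[B(2L+1)\rho-3B^2L(L+1)/(2\pi)]$, and it suffices to prove, for each $L\in\N_0$,
\[
j_R(\rho)\ \geq\ B(2L+1)\rho\ -\ \frac{3B^2L(L+1)}{2\pi}\qquad\text{for all }\rho\geq 0.
\]
The case $L=0$ is immediate from $j_R(\rho)\geq B\rho$.

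For $L\geq 1$, I would rescale by $\sigma:=2\pi\rho/B$ and set $f(\sigma):=\sigma+2\sum_{k\geq 1}(\sqrt\sigma-\sqrt k)_+^2$, so the inequality becomes $f(\sigma)\geq (2L+1)\sigma-3L(L+1)$. Since $f$ is convex and $C^1$ on $[0,\infty)$ with strictly increasing derivative on $[1,\infty)$, the difference is convex and its global infimum is attained at the unique $\sigma_L$ with $f'(\sigma_L)=2L+1$. On the interval $[K,K+1)$ one has the explicit formula $f(\sigma)=(2K+1)\sigma-4\sqrt\sigma\,S_K+K(K+1)$ with $S_K:=\sum_{k=1}^K\sqrt k$, whence $f'(\sigma)=2K+1-2S_K/\sqrt\sigma$; solving $f'(\sigma_L)=2L+1$ gives $\sqrt{\sigma_L}=S_K/(K-L)$. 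Using the sharp asymptotics $S_n=\tfrac{2}{3}n^{3/2}+\tfrac{1}{2}\sqrt n+O(1)$, one verifies that $\sigma_L$ lies in $[3L+1,3L+2)$, so the relevant index is $K=3L+1$. Substituting $K-L=2L+1$ into the minimum value $K(K+1)+3L(L+1)-2S_K^2/(K-L)$ of the convex difference then reduces the problem to the discrete algebraic inequality
\[
S_{3L+1}^2\ \leq\ (2L+1)(6L^2+6L+1),\qquad L\in\N_0,
\]
with equality at $L=0$.

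The principal obstacle is the tightness of this last inequality: both sides are $\sim 12L^3$ to leading order and coincide exactly at $L=0$, so neither Cauchy--Schwarz (which yields only $S_n^2\leq n^2(n+1)/2$) nor the simple convexity bound $S_n\leq \tfrac{2}{3}(n+1/2)^{3/2}$ is sharp enough; each falls short by a lower-order term. I would therefore establish the discrete inequality by induction on $L$, combining a sharper Euler--Maclaurin upper estimate for $S_n$ with the explicit increment $S_{3L+4}-S_{3L+1}=\sqrt{3L+2}+\sqrt{3L+3}+\sqrt{3L+4}$ and tracking lower-order contributions so that the induction step closes.
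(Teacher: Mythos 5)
Your route is genuinely different from the paper's and its skeleton is correct. You dualize the piecewise linear convex function $j$ into its supporting lines $\ell_L(\rho)=B(2L+1)\rho-B^2L(L+1)/(2\pi)$, reduce the lemma to the family $f(\sigma)\ge(2L+1)\sigma-3L(L+1)$ with $f(\sigma)=\sigma+2\sum_{k\ge1}(\sqrt\sigma-\sqrt k)_+^2$, and minimize the convex difference. I checked the chain: the formula $f(\sigma)=(2K+1)\sigma-4\sqrt\sigma\,S_K+K(K+1)$ on $[K,K+1)$, the critical point $\sqrt{\sigma_L}=S_K/(K-L)$, the minimum value $K(K+1)+3L(L+1)-2S_K^2/(K-L)$, and the specialization $K=3L+1$ are all right, and the resulting discrete inequality $S_{3L+1}^2\le(2L+1)(6L^2+6L+1)$ is true, with equality at $L=0$. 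The paper instead proves $j_R(3\rho)\ge 3j(\rho)$ directly by writing $3\rho=(B/2\pi)(K+s)$, maximizing over the residue of $K$ modulo $3$, and estimating $\sum_{k=1}^K\sqrt k$ by the trapezoid bound from concavity of $\sqrt t$; your version trades that case analysis for a cleaner one-parameter minimization, but both proofs ultimately hinge on the same square-root sum estimate.

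However, two steps in your outline are asserted rather than proved. First, the location $\sigma_L\in[3L+1,3L+2)$ cannot be ``verified'' from $S_n=\tfrac23n^{3/2}+\tfrac12\sqrt n+O(1)$: an unquantified $O(1)$ proves nothing for any fixed $L$, and the location is genuinely needed, because the per-interval minimum $K(K+1)+3L(L+1)-2S_K^2/(K-L)$ is negative for other $K$ (e.g.\ $K=L+1$ with $L$ large), so you cannot bypass it. It is provable with explicit two-sided bounds: the midpoint bound $S_n\ge\tfrac23\bigl((n+\tfrac12)^{3/2}-2^{-3/2}\bigr)$ gives $\sigma_L\ge 3L+1$, and $\sigma_L<3L+2$ follows from the discrete inequality once that is established independently. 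Second, the discrete inequality itself --- which you correctly identify as the crux --- is deferred to an unexecuted induction; as written this is a plan, not a proof. In fact no induction or refined Euler--Maclaurin analysis is needed: the trapezoid bound $S_n\le\tfrac23n^{3/2}+\tfrac12\sqrt n-\tfrac16$ (exactly the estimate the paper uses) suffices, whereas the bound $S_n\le\tfrac23(n+\tfrac12)^{3/2}$ you tested indeed does not. With $n=3L+1$ the right side of your inequality is $(4n^3+6n^2-1)/9$, and $(4n^3+6n^2-1)/9-\bigl(\tfrac23n^{3/2}+\tfrac12\sqrt n-\tfrac16\bigr)^2=\tfrac29n^{3/2}-\tfrac14n+\tfrac16\sqrt n-\tfrac5{36}$, which vanishes at $n=1$ and has derivative $\tfrac13\sqrt n-\tfrac14+\tfrac1{12\sqrt n}\ge\tfrac13-\tfrac14>0$ by the arithmetic--geometric mean inequality, hence is nonnegative for all $n\ge1$. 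With these two repairs your argument closes and constitutes a legitimate alternative proof of the lemma.
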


\begin{proof}
We are going to prove that
\begin{equation}\label{eq:rumcomp}
j_R(3\rho)\geq 3\ j(\rho) \,.
\end{equation}
Note that this is an equality for $\rho\leq B/(6\pi)$. Moreover, since the left side of \eqref{eq:rumcomp} is convex and the right side linear for $\rho\leq B/(2\pi)$, we conclude that \eqref{eq:rumcomp} holds for all $\rho\leq B/(2\pi)$.

Henceforth we shall assume that $\rho\geq B/(2\pi)$ and we write $3\rho=(B/2\pi)(K+s)$ with $K\in\N$ and $s\in[0,1)$. If $K=3L+m$ with $L\in\N$ and $m\in\{0,1,2\}$, then the lemma says that
$$
K+s + 2 \sum_{k=1}^K \left(\sqrt{K+s} - \sqrt{k}\right)^2 \geq 3 \left( L^2+ \tfrac13 (2L+1) (m+s) \right)\,.
$$
We expand the square on the left side and insert $L=(K-m)/3$ on the right side. This shows that the assertion is equivalent to
$$
K+s + 2 K(K+s) -4 \sqrt{K+s}\sum_{k=1}^K \sqrt{k} +K(K+1)
\geq \tfrac13 K^2 + \tfrac23 Ks +s + R \,,
$$
for $K\in\N$ and $s\in[0,1)$, where $R=-\frac13 m^2 -\frac23 ms+m$. Since the inequality has to be true for any $m\in\{0,1,2\}$, we can replace $R$ by its maximum over these $m$ (with fixed $s$), that is, by $(2/3)(1-s)$. Thus \eqref{eq:rumcomp} is equivalent to
$$
4K^2+(3+2s)K-6 \sqrt{K+s} \sum_{k=1}^K \sqrt{k} - 1+ s \geq 0 \,.
$$
The proof is straightforward for $K=1$ and we may therefore assume that $K\geq 2$. By the concavity of the square root we have 
$$\frac{\sqrt{k}+\sqrt{k+1}}{2}\leq \int_k^{k+1}\sqrt{t}\,dt \,.$$
Summing this from $k=1$ to $k=K-1$ we get
$$\sum_{k=1}^{K}\sqrt{k}\leq \int_1^K\sqrt{t}\,dt+\frac{1+\sqrt{K}}{2}
=\frac{2K^{3/2}}{3}+\frac{K^{1/2}}{2}-\frac{1}{6}\,.$$
This shows that
\begin{align*}
& 4K^2+(3+2s)K-6 \sqrt{K+s} \sum_{k=1}^K \sqrt{k} -1+s \\
& \quad \geq 4K^2+(3+2s)K- \sqrt{K(K+s)} (4K+3) +\sqrt{K+s} -1+s \\
& \quad = \frac{sK((4s-12)K-9)}{ 4K^2  + (3+2s)K +\sqrt{K(K+s)}(4K+3)}+\sqrt{K+s} -1+s \,.
\end{align*}
In the quotient on the right side we estimate the numerator from below by $-3sK(4K+3)$ and the denominator from below by $4K^2  + 3K +K(4K+3)= 2K(4K+3)$. Thus the quotient is bounded from below by $-3s/2$, and we conclude that
\begin{align*}
4K^2+(3+2s)K-6 \sqrt{K+s} \sum_{k=1}^K \sqrt{k} -\frac{3R}2 
\geq \sqrt{K+s} - 1 -\frac s2 \,.
\end{align*}
The right side is easily seen to be positive for $K\geq 2$ and $s\in[0,1)$, and this concludes the proof.
\end{proof}


\subsection{Proof of Theorem \ref{mainpot}}

In this section we are going to deduce Theorem \ref{mainpot} from Theorem \ref{kinen}. We define
$$
p(v) := -\frac{B}{2\pi} \sum_{m=0}^\infty \left((2m+1)B + v \right)_-
$$
for $v\in\R$. This is a convex, decreasing and non-positive function. The key observation is that this $p$ is the Legendre transform of the function $j$ from the previous subsection, that is,
\begin{equation}\label{eq:legendre}
p(v) = \inf_{\rho\geq 0} \left( j(\rho) + v\rho \right) \,.
\end{equation}
This can be verified by elementary computations.

In order to prove Theorem \ref{mainpot} we apply Theorem \ref{kinen} to get the estimate 
$$
\Tr\left( H_B + V \right)\gamma 
\geq \int_{\R^2} \left( 3 j(\rho_\gamma(x)/3) + V(x)\rho_\gamma(x) \right) \,dx
$$
for any $0\leq\gamma\leq 1.$ According to \eqref{eq:legendre} this is bounded from below by $3 \int_{\R^2} p(V(x)) \,dx$. For $\gamma$ equal to the projection corresponding to the negative spectrum of $H_B + V$ we obtain the assertion of Theorem \ref{mainpot}.

\begin{remark}
Similar arguments show that Theorem \ref{kinen} can be deduced from Theorem \ref{mainpot}. Indeed, since $j$ is convex it is its double Legendre transform. By \eqref{eq:legendre} we obtain
\begin{equation}\label{eq:legendre2}
j(\rho) = \inf_{v\in\R} \left( p(v) + v\rho \right) \,.
\end{equation}
By the variational principle and Theorem \ref{mainpot} we can estimate for any $0\leq \gamma\leq 1$ and any $V$
$$
\Tr H_B \gamma 
\geq - \Tr\left(H_B +V\right)_- + \int_{\R^2} V(x)\rho_\gamma(x) \,dx
\geq \int_{\R^2} \left( 3 p(V(x)) + V(x)\rho_\gamma(x) \right) \,dx \,.
$$
According to \eqref{eq:legendre2} this is bounded from below by $3 \int_{\R^2} j(\rho_\gamma(x)/3) \,dx$, and this shows Theorem~\ref{kinen}.
\end{remark}



\section{Proof of Theorem \ref{main}}

\subsection{The spectrum of $H_{B} + V$}

The explicit form of the eigenvalues of $H_{B} + \omega^2|x|^2$ was discoverd in \cite{Fo}. We include an alternative derivation of this result, which is also valid in the non-radial case.

\begin{lemma}\label{explicit}
 For any $B>0$ and $\omega_1,\omega_2>0$ the operator $H_{B} + \omega_1^2x_1^2+\omega_2^2x_2^2$ has discrete spectrum and its eigenvalues, including multiplicities, are given by
$$
B \left( a_+(\tfrac{\omega_1}B,\tfrac{\omega_2}B) \, (2k+1) + a_-(\tfrac{\omega_1}B,\tfrac{\omega_2}B) \, (2l+1) \right) \,,
\qquad k,l\in\N_0 \,,
$$
where
\begin{equation}
 \label{eq:lambda}
a_\pm(\sigma_1,\sigma_2) = \sqrt{ \tfrac12 \left( 1+\sigma_1^2+\sigma_2^2 \pm \sqrt{(1+\sigma_1^2+\sigma_2^2)^2 -4\sigma_1^2\sigma_2^2} \right) } \,.
\end{equation}
\end{lemma}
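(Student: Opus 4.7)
The strategy is to diagonalize $H_B + \omega_1^2 x_1^2 + \omega_2^2 x_2^2$ via a linear symplectic transformation on the four-dimensional phase space. The operator is the Weyl quantization of the positive definite quadratic form $\sigma(x,\xi) = \langle z, M z\rangle$, where $z = (x_1,x_2,\xi_1,\xi_2)^T$ and
\[
M = \begin{pmatrix} \omega_1^2 + B^2/4 & 0 & 0 & -B/2 \\ 0 & \omega_2^2 + B^2/4 & B/2 & 0 \\ 0 & B/2 & 1 & 0 \\ -B/2 & 0 & 0 & 1 \end{pmatrix}.
\]
Positive definiteness follows by computing the Schur complement with respect to the momentum block $I_2$, which equals $\mathrm{diag}(\omega_1^2,\omega_2^2) > 0$.

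By Williamson's theorem there is a linear symplectic map $S$ on $\R^4$ such that $S^T M S = \mathrm{diag}(d_+, d_-, d_+, d_-)$, where $d_\pm > 0$ are characterized by the fact that $\pm i\, d_\pm$ are the eigenvalues of the Hamilton matrix $JM$ (with $J$ the standard symplectic matrix). Through the metaplectic representation $S$ lifts to a unitary $U$ on $L^2(\R^2)$ under which
\[
U\bigl(H_B + \omega_1^2 x_1^2 + \omega_2^2 x_2^2\bigr)U^* = d_+\bigl(-\partial_{y_1}^2 + y_1^2\bigr) + d_-\bigl(-\partial_{y_2}^2 + y_2^2\bigr),
\]
a sum of two decoupled harmonic oscillators. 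Its spectrum is therefore discrete and consists precisely of $d_+(2k+1) + d_-(2\ell+1)$ with $k,\ell \in \N_0$.

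It remains to identify $d_\pm$. Since the spectrum of $JM$ is symmetric under $\lambda \mapsto -\lambda$, the polynomial $\det(JM - \lambda I)$ is in fact a polynomial in $t = \lambda^2$, and a direct expansion of the $4\times 4$ determinant gives
\[
t^2 + (\omega_1^2 + \omega_2^2 + B^2)\, t + \omega_1^2 \omega_2^2 = 0.
\]
Its two roots are $t = -d_\pm^2$; solving the quadratic and comparing with \eqref{eq:lambda} yields $d_\pm = B\, a_\pm(\omega_1/B, \omega_2/B)$, which proves the formula.

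The main obstacle is the passage from the classical symplectic diagonalization of the symbol to a unitary equivalence of the operators; this is where the metaplectic representation enters. A more elementary alternative, avoiding any abstract symplectic machinery, is to construct two commuting pairs of annihilation/creation operators $A_\pm, A_\pm^*$ directly, as complex linear combinations of the magnetic momenta $\Pi_j$ and a commuting pair of guiding-center operators, with coefficients read off from the eigenvectors of $JM$. One then verifies by explicit computation that these pairs satisfy the canonical commutation relations and that $H_B + \omega_1^2 x_1^2 + \omega_2^2 x_2^2 = d_+(2 A_+^* A_+ + 1) + d_-(2 A_-^* A_- + 1)$, from which the spectrum and multiplicities follow at once.
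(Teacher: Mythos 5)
Your proposal is correct, and I checked the computational core: the matrix $M$ of the Weyl symbol, the Schur complement $\mathrm{diag}(\omega_1^2,\omega_2^2)$, the even characteristic polynomial $t^2+(\omega_1^2+\omega_2^2+B^2)t+\omega_1^2\omega_2^2$ of $JM$, and the identification $d_\pm=B\,a_\pm(\omega_1/B,\omega_2/B)$ all check out. However, your route is genuinely different from the paper's. The paper proceeds by a chain of completely explicit, elementary unitaries: the gauge transform $e^{-iBx_1x_2/2}$ passes to the Landau gauge, a partial Fourier transform in $x_2$ and the scaling $x_2\mapsto\omega_2 x_2$ then turn the operator into the anisotropic oscillator $-\Delta+x^tAx$ with the explicit $2\times2$ matrix $A=\begin{pmatrix} B^2+\omega_1^2 & -B\omega_2\\ -B\omega_2 & \omega_2^2\end{pmatrix}$, whose eigenvalues $B^2a_\pm^2$ are read off directly and a rotation decouples the two one-dimensional oscillators. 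You instead diagonalize the full $4\times4$ symbol by Williamson's theorem and transport this to the operator via the metaplectic representation (symplectic covariance of Weyl quantization). What your approach buys is generality and conceptual clarity: it works verbatim for any positive definite quadratic Hamiltonian in any dimension and requires no clever choice of gauge; the symplectic eigenvalues come mechanically from $\det(JM-\lambda I)$. What it costs is the reliance on nonelementary machinery -- Williamson plus the metaplectic lift -- where one should either cite the standard references or, as you suggest, build the two commuting ladder pairs explicitly; in either case one should also say a word about why the unitary equivalence holds at the level of the self-adjoint operators (e.g.\ both operators are essentially self-adjoint on Schwartz space and the metaplectic unitary preserves it), a point you flag but do not carry out. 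The paper's argument avoids all of this and is self-contained at the level of one-dimensional harmonic oscillators.
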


\begin{remark}
It will be important for our analysis below that
\begin{equation}
 \label{eq:lambdadiff}
a_-(\sigma)\, a_+(\sigma) = \sigma_1\sigma_2 \,,
\end{equation}
which is easily checked.
\end{remark}

\begin{proof}
 By means of the gauge transform $e^{-iBx_1x_2/2}$ we see that $H_{B} + V$ is unitarily equivalent to the operator
$$
- \frac{\partial^2}{\partial x_1^2} + \left(-i \frac\partial{\partial x_2} - Bx_1 \right)^2 + \omega_1^2 x_1^2 + \omega_2^2 x_2^2 \,,
$$
which, in turn, by a partial Fourier transform with respect to $x_2$, is unitarily equivalent to
$$
- \frac{\partial^2}{\partial x_1^2} + \left( x_2 - Bx_1 \right)^2 + \omega_1^2 x_1^2 -\omega_2^2 \frac{\partial^2}{\partial x_2^2} \,.
$$
After scaling $x_2\mapsto \omega_2 x_2$ this becomes the non-radial harmonic oscillator $-\Delta + x^t A x$ with the matrix
$$
A = \begin{pmatrix} B^2+\omega_1^2 & -B\omega_2\\ -B\omega_2& \omega_2^2 \end{pmatrix} \,.
$$
The eigenvalues of $A$ are $B^2a_+(\omega_1/B,\omega_2/B)^2$ and $B^2a_-(\omega_1/B,\omega_2/B)^2$. Using the eigenvectors of $A$ as basis in $\R^2$, we obtain a direct sum of two one-dimensional harmonic oscillators with frequencies $Ba_+$ and $Ba_-$, respectively. This leads to the stated form of the eigenvalues.
\end{proof}

According to Lemma \ref{explicit} and a simple computation, \eqref{eq:mainpot} with $\rho_2=1$ is equivalent to
$$
\sum_{k,l\ge 0} \!\!\left( \mu - B a_+(\tfrac{\omega_1}B,\tfrac{\omega_2}B) (2k+1) - B a_-(\tfrac{\omega_1}B,\tfrac{\omega_2}B)  (2l+1) \right)_+
\leq \frac{B}{4\omega_1\omega_2} \sum_{m\ge 0} \!\left(\mu- (2m+1)B\right)_+^2
$$
with $a_\pm$ given by \eqref{eq:lambda}. Setting $\Lambda=\mu/B$, $\sigma_j=\omega_j/B$ and $a_\pm=a_\pm(\sigma)$ and substituting \eqref{eq:lambdadiff} we can rewrite the desired inequality as
\begin{equation}
\label{eq:mainequiv}
\sum_{k,l\ge 0} \left( \Lambda - a_+\, (2k+1) - a_-\, (2l+1) \right)_+
\leq \frac{1}{4 a_- a_+} \sum_{m\ge0} \left(\Lambda- (2m+1)\right)_+^2 \,,
\end{equation}
and this is what we shall prove.


\subsection{Two inequalities for convex functions}

For the proof of \eqref{eq:mainequiv} we shall need

\begin{lemma}\label{mean}
Let $\phi$ be a non-negative convex function on $(0,\infty)$ such that $\int_0^\infty \phi(t) \,dt$ exists. Then
$$
\sum_{k=0}^\infty \phi(k+\tfrac12) \leq \int_0^\infty \phi(t) \,dt \,.
$$
\end{lemma}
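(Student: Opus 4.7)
The plan is to prove this via the standard midpoint inequality for convex functions, applied on each unit interval $[k,k+1]$, and then sum over $k$.

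First I would fix $k\in\N_0$ and exploit convexity of $\phi$ at the midpoint $k+\tfrac12$. Since $\phi$ is convex on $(0,\infty)$ and $k+\tfrac12$ lies in the interior of its domain, there exists a subgradient $c\in\R$ such that
$$
\phi(t)\geq \phi(k+\tfrac12) + c\bigl(t-(k+\tfrac12)\bigr)
$$
for all $t\in(0,\infty)$. Integrating this tangent-line inequality over $t\in[k,k+1]$ makes the linear term vanish by symmetry about the midpoint, yielding
$$
\int_k^{k+1} \phi(t)\,dt \geq \phi(k+\tfrac12) \,.
$$
(Equivalently, this is Jensen's inequality applied to the uniform probability measure on $[k,k+1]$, whose mean is $k+\tfrac12$.)

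Next I would sum the resulting inequalities over $k=0,1,2,\ldots$. Non-negativity of $\phi$ guarantees that the partial sums of $\sum_k \phi(k+\tfrac12)$ are monotone, and combined with the telescoping of the integrals we get
$$
\sum_{k=0}^\infty \phi(k+\tfrac12) \leq \sum_{k=0}^\infty \int_k^{k+1} \phi(t)\,dt = \int_0^\infty \phi(t)\,dt \,,
$$
which is the claim.

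There is no real obstacle here; the only minor point to check is that the subgradient at $k+\tfrac12$ exists (immediate since $k+\tfrac12$ lies in the interior of the domain of the convex function $\phi$) and that both sides make sense (guaranteed by non-negativity of $\phi$ together with the hypothesis that $\int_0^\infty \phi(t)\,dt$ exists, which forces $\phi$ to be integrable near $0$ despite the domain being only $(0,\infty)$).
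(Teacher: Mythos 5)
Your proposal is correct and is essentially the paper's argument: the subgradient/Jensen step you spell out is exactly the ``mean-value property of convex functions'' $\phi(k+\tfrac12)\leq\int_k^{k+1}\phi(t)\,dt$ that the paper invokes, followed by the same summation over $k$.
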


\begin{proof}
Indeed, by the mean-value property of convex functions $\phi(k+\tfrac12) \leq \int_{k}^{k+1} \phi(t) \,dt$ for each $k$. Now sum over $k$.
\end{proof}

\begin{remark}\label{mean1}
 The proof also shows that $\sum_{k=0}^{K-1} \phi(k+\tfrac12) \leq \int_0^{K} \phi(t) \,dt$ for each integer $K$. This observation will be useful later.
\end{remark}

The inequality from Lemma \ref{mean} is sufficient to prove a sharp Lieb-Thirring inequality in the non-magnetic case, but for the proof of our Theorem \ref{main} we need a more subtle fact about convex functions. We note that by the previous lemma $h \sum_{k=0}^\infty \phi(h(k+\tfrac12)) \leq \int_0^\infty \phi(t) \,dt$ for any $h>0$. Moreover, $h \sum_{k=0}^\infty \phi(h(k+\tfrac12)) \to \int_0^\infty \phi(t) \,dt$ as $h\to 0$ by the definition of the Riemann integral. The key for proving our sharp result is that, for a certain subclass of convex functions, this limit is approached monotonically. More precisely, one has

\begin{lemma}\label{mono}
Let $\phi$ be a non-negative convex function on $(0,\infty)$ such that $\int_0^\infty \phi(t) \,dt$ exists. Assume that $\phi$ is differentiable and that $\phi'$ is concave. Then the sum
$$
h \sum_{k=0}^\infty \phi(h(k+\tfrac12))
$$
is decreasing in the parameter $h>0$.
\end{lemma}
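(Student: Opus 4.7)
The plan is to reduce the statement to a single extremal family of functions via an integral representation of $\phi$. Under the stated hypotheses, $-\phi'$ is non-negative, non-increasing and convex on $[0,\infty)$ with limit $0$ at infinity (the latter because a convex, non-negative, integrable function must eventually decrease to $0$, which in turn forces $\phi'\to 0$). Any such function admits a Choquet-type representation
$$
-\phi'(t)=\int_0^\infty (u-t)_+\,d\nu(u)
$$
for some non-negative Radon measure $\nu$ on $(0,\infty)$. Integrating from $t$ to infinity and swapping via Tonelli yields
$$
\phi(t)=\int_0^\infty \tfrac12(u-t)_+^2\,d\nu(u).
$$
Substituting into the sum and exchanging sum and integral (all terms non-negative) reduces the lemma to showing that, for every fixed $u>0$, the function
$$
f_u(h):=\frac{h}{2}\sum_{k=0}^\infty\bigl(u-h(k+\tfrac12)\bigr)_+^2
$$
is non-increasing in $h>0$. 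The trivial rescaling $f_u(h)=u^3 f_1(h/u)$ then lets me assume $u=1$.

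The remaining task is to analyze $f_1(h)$ directly. Each summand $(1-h(k+\tfrac12))_+^2$ is $C^1$ in $h$ because both its value and its $h$-derivative vanish at the threshold $h=2/(2k+1)$, so $f_1\in C^1((0,\infty))$. On the interval $h\in[2/(2N+3),2/(2N+1))$ the nonzero indices are precisely $k=0,\dots,N$, and differentiating termwise, expanding the squares, and applying the elementary identities $\sum_{k=0}^N(k+\tfrac12)=(N+1)^2/2$ and $\sum_{k=0}^N(k+\tfrac12)^2=(N+1)(2N+1)(2N+3)/12$ will give
$$
2f_1'(h)=\sum_{k=0}^N\bigl(1-h(k+\tfrac12)\bigr)\bigl(1-3h(k+\tfrac12)\bigr)
=(N+1)\!\left[1-2h(N+1)+\frac{h^2(2N+1)(2N+3)}{4}\right].
$$
The crucial point is that the discriminant of the resulting quadratic in $h$ equals $4(N+1)^2-(2N+1)(2N+3)=1$, so its two roots are exactly $2/(2N+1)$ and $2/(2N+3)$; hence the quadratic is non-positive on the closed interval $[2/(2N+3),2/(2N+1)]$, which contains the range defining $N$. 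Thus $f_1'\le 0$ on every such interval, and by the global $C^1$-continuity of $f_1$ one concludes that $f_1$ is non-increasing on $(0,\infty)$.

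The main obstacle is really the first step: justifying the representation $\phi(t)=\int\tfrac12(u-t)_+^2\,d\nu(u)$ under the relatively weak smoothness assumed. For $\phi\in C^2$ one may simply take $d\nu(u)=-\phi'''(u)\,du$, which is a non-negative measure since $\phi'$ is concave; in the general case I would first prove the lemma for a mollified $\phi_\epsilon$ and then let $\epsilon\downarrow 0$, the monotonicity of $h\mapsto f(h)$ surviving pointwise limits. Everything downstream is elementary, but rests on the striking coincidence that the discriminant equals exactly $1$, which aligns the zero set of $f_1'$ precisely with the breakpoints $h=2/(2m+1)$ at which the summation range jumps, thereby making the quadratic inequality tight at the endpoints of each piece.
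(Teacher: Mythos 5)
Your proposal is correct and follows essentially the same route as the paper: reduce via the representation $\phi(t)=\int_0^\infty \tfrac12(u-t)_+^2\,d\nu(u)$ (obtained from the Choquet-type representation of the convex function $-\phi'$) to the extremal family, and then check that the $h$-derivative is non-positive through the same quadratic identity --- your expression $(N+1)\bigl[1-2h(N+1)+\tfrac{h^2(2N+1)(2N+3)}{4}\bigr]$ is exactly the paper's factored form $(K+1)(S-2K-1)(S-2K-3)$ after the substitution $S=2T/h$, so the ``discriminant equals $1$'' coincidence is the same factorization at the breakpoints. Your worry about smoothness in the first step is unnecessary: the representation $-\phi'(t)=\int(u-t)_+\,d\nu(u)$ holds for every non-negative convex function tending to zero at infinity, with no extra regularity and no mollification needed.
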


We emphasize that without assumptions on $\phi'$ the inequality
$$
\sum_{k=0}^\infty \phi(k+\tfrac12) \leq
h \sum_{k=0}^\infty \phi(h(k+\tfrac12)) 
$$
is not true for all $h< 1$. Indeed, take for instance $\phi(t)=(1-t)_+$ and $h\geq 2/3$.

In the proof of this lemma we shall make use of the following well-known fact about convex functions: If $\psi$ is a non-negative convex function on $(0,\infty)$ such that $\int_0^\infty \psi(t) \,dt$ exists, then $\psi(t)=\int_0^\infty(T-t)_+ \,d\mu(T)$ for some non-negative measure $\mu$. Indeed, it is known that the left-sided derivative $\partial_-\psi$ exists everywhere on $(0,\infty)$ and satisfies $\psi(b)-\psi(a)=\int_a^b \partial_-\psi(t)\,dt$ for $0<a<b<\infty$. Moreover, $\partial_-\psi$ is increasing and left-continuous, and therefore there is a non-negative measure $\mu$ such that $\partial_-\psi(b)-\partial_-\psi(a)=\mu([a,b))$. Since $\lim_{t\to\infty}\psi(t)=\lim_{t\to\infty}\partial_-\psi(t)=0$, we have by Fubini's theorem
$$
\psi(t) = -\int_t^\infty \partial_-\psi(a)\,da = \int_t^\infty \left( \int \chi_{[a,\infty)}(T) \,d\mu(T) \right) \,da
= \int_0^\infty(T-t)_+ \,d\mu(T) \,,
$$
as claimed.

\begin{proof}
By the fact recalled above (with $\psi=-\phi'$) we have $\phi(t)=\int_0^\infty(T-t)_+^2 \,d\mu(T)$ for a non-negative measure $\mu$. Hence it suffices to prove the lemma for $\phi(t)=(T-t)_+^2$ with $T>0$. We have to prove that $\sum_{k=0}^\infty \left( \phi(h(k+\tfrac12)) + h(k+\tfrac12)\phi'(h(k+\tfrac12)\right)\leq 0$, which for our $\phi$ reads
$$
\sum_{k=0}^\infty \left( (S - 2k-1)_+^2 - 2(2k+1)(S - 2k-1)_+ \right)\leq 0 \,
$$
with $S=2T/h$. Choose $K\in\N_0$ such that $2K+1\leq S < 2K+3$. Then the left side above equals
\begin{align*}
& \sum_{l=0}^K \left( (S - 2k-1)^2 - 2(2k+1)(S - 2k-1) \right)
= \sum_{l=0}^K \left( S^2 -4S (2k+1) +3(2k+1)^2 \right) \\
& \qquad = (K+1) \left( S^2 -4S (K+1) + (2K+1) (2K+3) \right) \\
& \qquad = (K+1) (S - 2K-1)(S - 2K-3) \,.
\end{align*}
This is clearly non-positive for $2K+1\leq S < 2K+3$, thus proving the claim.
\end{proof}


\subsection{Proof of Theorem \ref{main}}

We have to prove \eqref{eq:mainequiv}. By Lemma \ref{mean} for any $k$
\begin{align*}
\sum_{l\geq 0} \left(\Lambda - a_+(2k+1) -a_-(2l+1)\right)_+ 
& \leq \int_0^\infty \left(\Lambda - a_+(2k+1) -2a_- t\right)_+ \,dt \\
& = \frac{1}{4a_-} \left(\Lambda - a_+(2k+1) \right)_+^2 \,.
\end{align*}
A simple computation shows that $a_+=a_+(\sigma)\geq 1$, and hence by Lemma \ref{mono}
$$
a_+ \sum_{k\geq 0} \left(\Lambda - a_+(2k+1) \right)_+^2
\leq \sum_{k\geq 0} \left(\Lambda - (2k+1) \right)_+^2 \,.
$$
The previous two inequalities imply the desired \eqref{eq:mainequiv}.
\qed


\subsection{Proof of Proposition \ref{sharpness}}\label{sec:gamma-}

Given $0\leq\gamma<1$, we want to find $\omega_1=\omega_2$ and $B$ such that the reverse inequality \eqref{eq:gamma-} holds. We may assume $\gamma>0$ in the following. (The case $\gamma=0$ can be treated similarly, or one may use the argument of Aizenman and Lieb mentioned in the introduction to conclude that a counterexample for $\gamma=\gamma_0$ implies one for all $\gamma<\gamma_0$.)

By the same computation that lead to \eqref{eq:mainequiv} we see that \eqref{eq:gamma-} can be written as
\begin{align*}
 \sum_{k,l\ge 0} \left( \Lambda - a_+\, (2k+1) - a_-\, (2l+1) \right)_+^{\gamma}
> \frac{1}{2(\gamma+1)a_- a_+} \sum_{m\ge0} \left(\Lambda- (2m+1)\right)_+^{\gamma+1}
\end{align*}
with $\Lambda=\mu/B$, $\sigma_j=\omega_j/B$ and $a_\pm=a_\pm(\sigma)$. We will let $\omega_1=\omega_2$ and use the notation $t=\sigma^2$. One can show that 
$a_+=1+t+O(t^2)$
and 
$a_-=t+O(t^2)$
as $t\to 0+.$
We now choose $\Lambda=3$ and recall that $a_+=a_+(\sigma)\geq 1$. This gives us the inequality
$$2(\gamma+1)a_- a_+ \sum_{l\ge 0} \left( 3 - a_+\, - a_-\, (2l+1) \right)_+^{\gamma}-2^{\gamma+1}>0 \,,$$
which may be written as
$$(\gamma+1)a_-^{\gamma+1}a_+\sum_{l\ge 0}(x-l)_+^\gamma - 1>0$$
with $x=(3-a_+-a_-)/(2a_-)$. Since $x=t^{-1}(1+O(t))$ as $t\to 0+$, we may choose $\sigma$ so that $x$ is an integer. In this case we may use the concavity of $y^\gamma$ and Remark \ref{mean1} to bound
$$\sum_{l\ge 0}(x-l)_+^\gamma=\sum_{l=1}^x l^\gamma \ge \int_{1/2}^{x+1/2}t^{\gamma}\,dt=\frac{1}{\gamma+1}((x+1/2)^{\gamma+1}-(1/2)^{\gamma+1}) \,.$$
This shows that 
\begin{align*}
(\gamma+1)a_-^{\gamma+1} a_+ \sum_{l\ge 0}(x-l)_+^\gamma 
&\ge a_+ \left((a_-x+a_-/2)^{\gamma+1}-(a_-/2)^{\gamma+1}\right)\\
&=a_+ \left(((3-a_+)/2)^{\gamma+1}-(a_-/2)^{\gamma+1}\right)\\
&=\left(1+t+O(t^2)\right)\left(1-t/2+O(t^2)\right)^{\gamma+1}+O(t^{\gamma+1})\\
&=1+\frac{1-\gamma}{2}t+O(t^{\gamma+1}) \,.
\end{align*}
Since this is strictly larger than $1$ for sufficiently small $t$, we have proved our claim.
\qed


\bibliographystyle{amsalpha}

\begin{thebibliography}{LiSoYn2}

\bibitem[AiLi]{AiLi} M. Aizenman, E. H. Lieb, \textit{On semiclassical bounds for eigenvalues of Schr\"odinger operators}. Phys. Lett. A \textbf{66} (1978), no. 6, 427--429.

\bibitem[dB]{dB} R. de la Bret\`eche, \textit{Preuve de la conjecture de Lieb-Thirring dans le cas des potentiels quadratiques strictement convexes}. (French) Ann. Inst. H. Poincar\'e Phys. Th\'eor. \textbf{70} (1999),  no. 4, 369--380.

\bibitem[DoLaLo]{DLL} J.~Dolbeault, A.~Laptev, M.~Loss, \textit{Lieb-Thirring inequalities with improved constants}. J. Eur. Math. Soc. \textbf{10} (2008), 1121--1126.

\bibitem[ErLoVo]{ErLoVo} L. Erd\H os, M. Loss, V. Vougalter, \textit{Diamagnetic behavior of sums of Dirichlet eigenvalues}. Ann. Inst. Fourier \textbf{50} (2000), no. 3, 891--907.

\bibitem[Fo]{Fo} V. Fock, \textit{Bemerkung zur Quantelung des harmonischen Oszillators im Magnetfeld}. (German) Zeitschrift f\"ur Physik \textbf{47} (1928), 446--448.

\bibitem[Fr]{Fr} R. L. Frank, \textit{Remarks on eigenvalue estimates and semigroup domination}. In: Spectral and Scattering Theory for Quantum Magnetic Systems, P. Briet, et al. (eds.), 63--86, Contemp. Math. 500, Amer. Math. Soc., Providence, RI, 2009.

\bibitem[FrLoWe]{FrLoWe} R. L. Frank, M. Loss, T. Weidl, \textit{P\'olya's conjecture in the presence of a constant magnetic field}. J. Eur. Math. Soc. (JEMS) \textbf{11} (2009), no. 6, 1365--1383.

\bibitem[HuLaWe]{HLW}
D. Hundertmark, A. Laptev and T. Weidl, \textit{New bounds on the
Lieb-Thirring constants}. Invent. Math. \textbf{40} (2000), 693--704.

\bibitem[La1]{La1} A. Laptev, \textit{Dirichlet and Neumann eigenvalue problems on domains in Euclidean spaces}. J. Funct. Anal. \textbf{151} (1997),  no. 2, 531--545.

\bibitem[La2]{La2} A. Laptev, \textit{On the Lieb-Thirring conjecture for a class of potentials}.  The Maz'ya anniversary collection, Vol. 2 (Rostock, 1998),  227--234, Oper. Theory Adv. Appl. \textbf{110}, Birkh\"auser, Basel, 1999.

\bibitem[LaWe]{LaWe} A. Laptev, T. Weidl, \textit{Sharp Lieb-Thirring inequalities in high dimensions}. Acta Math. \textbf{184} (2000), no. 1, 87--111.

\bibitem[LiSoYn]{LiSoYn} E. H. Lieb, J. P. Solovej, J. Yngvason, \textit{Asymptotics of heavy atoms in high magnetic fields. II. Semiclassical regions}. Comm. Math. Phys. \textbf{161} (1994),  no. 1, 77--124.

\bibitem[LiSoYn2]{LiSoYn2} E. H. Lieb, J. P. Solovej, J. Yngvason, \textit{Ground states of large quantum dots in magnetic fields}. Phys. Rev. B \textbf{51} (1995),  no. 16, 10646--10665.

\bibitem[LiTh]{LiTh} E. H. Lieb, W. Thirring, \textit{Inequalities for the moments of the eigenvalues of the 
Schr\"odinger Hamiltonian and their relation to Sobolev inequalities}. Studies in Mathematical Physics, 269--303. Princeton Univ. Press, Princeton, NJ, 1976.

\bibitem[Ru]{Ru} M. Rumin, \textit{Balanced distribution-energy inequalities and related entropy bounds}. Preprint (2010), arXiv:1008.1674.

\end{thebibliography}

\end{document}